\newcommand{\C}{\mathbb {C}}
\newcommand{\Z}{\mathbb {Z}} 
\newcommand{\N}{\mathbb {N}}
\theoremstyle{definition}
\newtheorem*{thm}{Theorem}
\newtheorem{lem}{Lemma}
\newtheorem*{dfn}{Definition}
\newtheorem*{rmk}{Remarks}
\newtheorem*{que}{Questions}
\title{A Cantor Spectrum Diagonal in $\mathcal{O}_2$}
\author{Philipp Sibbel}
\address{Mathematisches Institut \\
Universit\"at M\"unster\\
Einsteinstr.\ 62, 48149 M\"unster\\ Germany}
\email{philipp.sibbel@uni-muenster.de}
\author{Wilhelm Winter}
\address{Mathematisches Institut\\
Universit\"at M\"unster\\ 
Einsteinstr.\ 62, 48149 M\"unster\\ Germany}
\email{wwinter@uni-muenster.de}
\subjclass[2020]{46L05}
\keywords{Cartan subalgebra, C$^*$-diagonal, Cuntz algebra}
\thanks{Funded by the Deutsche Forschungsgemeinschaft (DFG, German Research Foundation) under Germany’s Excellence Strategy – EXC 2044 – 390685587, Mathematics Münster – Dynamics – Geometry – Structure, the Deutsche Forschungsgemeinschaft (DFG, German Research Foundation) – Project-ID 427320536 – SFB 1442, and ERC Advanced Grant 834267 - AMAREC}
\begin{document}
\begin{abstract}
We prove the existence of a C$^*$-diagonal in the Cuntz algebra $\mathcal{O}_2$ with spectrum homeomorphic to the Cantor space.
\end{abstract}

\maketitle

\noindent
In 1986, Kumjian defined the notion of a C$^*$-diagonal in a C$^*$-algebra as an analogue to the notion of a Cartan subalgebra in von Neumann algebras (\cite{kumjian1986c}). He showed that diagonal C$^*$-pairs occur as groupoid C$^*$-algebras of twisted étale equivalence relations. In \cite{renault2008cartan}, Renault noted that the notion of a C$^*$-diagonal is sometimes too restrictive as it does not cover a large number of canonical maximal abelian subalgebras in important C$^*$-algebras. As an example, he mentioned the Cuntz algebras $\mathcal{O}_n$ and more generally graph algebras, which contain obvious regular maximal abelian subalgebras that are not C$^*$-diagonals. The difference is that these C$^*$-algebras can be constructed as groupoid C$^*$-algebras from groupoids which have some isotropy, more precisely, they are only topologically principal as opposed to principal. As a remedy, Renault defined the weaker notion of a Cartan subalgebra in a C$^*$-algebra.

\begin{dfn}
Let $A$ be a C$^*$-algebra. An abelian sub-C$^*$-algebra $D$ in $A$ is called a \textit{Cartan subalgebra} if
\begin{enumerate}
\item[(0)] $D$ contains an approximate unit of $A$,
\item[(1)] $D$ is maximal abelian in $A$,
\item[(2)] $D$ is \textit{regular}, i.e.\ the normalisers of $D$,
\begin{equation*}
\mathcal{N}_A(D) := \{n \in A \mid n^*Dn \subset D \text{ and } nDn^* \subset D\},
\end{equation*}
generate $A$ as a C$^*$-algebra, and
\item[(3)] there exists a faithful conditional expectation $\Psi$ of $A$ onto $D$.
\end{enumerate}
$D$ is called a C$^*$-\textit{diagonal} if, moreover,
\begin{enumerate}
\item[(4)]
the unique extension property of pure states is satisfied, that is, every pure state on $D$ has a unique extension to a (necessarily pure) state on $A$.
\end{enumerate}
\end{dfn}

Renault then showed the analogous result to Kumjian's theorem, that is, the reduced C$^*$-algebra of a second-countable, topologically principal, locally compact, Hausdorff, étale, twisted groupoid contains a canonical Cartan subalgebra corresponding to the unit space, and every Cartan subalgebra in a separable C$^*$-algebra uniquely arises in this fashion (\cite{renault2008cartan}). A Cartan subalgebra then is a C$^*$-diagonal if and only if the corresponding groupoid is principal.

In the following years, motivated by results in the theory of von Neumann algebras, Cartan subalgebras were studied intensively in the C$^*$-algebraic setting. Several existence and non-uniqueness results for Cartan subalgebras and C$^*$-diagonals were obtained (e.g.\ \cite{renault2008cartan}, \cite{barlak2017cartan}, \cite{li2019cartan}, \cite{li2020every}, \cite{li2022constructing}), where the interplay between the study of Cartan subalgebras and classification theory of C$^*$-algebras attracted particular attention. In \cite{li2020every}, Li completed the proof that every classifiable (in the sense of Elliott's programme; cf.\ \cite{winter2018structure}) simple C$^*$-algebra has a Cartan subalgebra and that a unital, separable, simple C$^*$-algebra with finite nuclear dimension contains a Cartan subalgebra if and only if it satisfies the Universal Coefficient Theorem (UCT) of Rosenberg and Schochet (also see \cite{barlak2017cartan}).\newline

In \cite{cuntz1977simple}, Cuntz introduced an important class of C$^*$-algebras, which he denoted by $\mathcal{O}_n$, for $2 \le n \le \infty$, and which were later called Cuntz algebras. For $n$ finite, $\mathcal{O}_n$ is the universal C$^*$-algebra generated by $n$ isometries $S_1, \ldots, S_n$ with pairwise orthogonal range projections which add up to the unit. The Cuntz algebra $\mathcal{O}_n$ then contains a subalgebra isomorphic to $M_{n^\infty}$, the UHF algebra of type $n^\infty$. It is generated by elements of the form $S_\mu S_\nu^*$ for finite sequences $\mu, \nu \in \{1,\ldots,n\}^k$ of the same length, where $S_\mu = S_{\mu_1}\ldots S_{\mu_k}$ for $\mu = (\mu_1,\ldots,\mu_k)$. It is well-known that the subalgebra $D_{n^\infty}$ generated by diagonal matrices in $M_{n^\infty}$ is a C$^*$-diagonal. The corresponding subalgebra in the Cuntz algebra $\mathcal{O}_n$, which we also denote by $D_{n^\infty}$, is generated by elements of the form $S_\mu S_\mu^*$ and turns out to be a Cartan subalgebra in $\mathcal{O}_n$; however, the unique extension property of pure states is not satisfied and therefore it is not a C$^*$-diagonal in $\mathcal{O}_n$ (cf.\ \cite{renault2008cartan}). In particular, an element in $\{1,\ldots,n\}^\N$, the spectrum of $D_{n^\infty}$, corresponds to a pure state on $D_{n^\infty}$ with a unique extension to $\mathcal{O}_n$ if and only if the element is an (eventually) aperiodic sequence (\cite{cuntz1980automorphisms}, \cite{evans1980ono}).

The natural question whether $\mathcal{O}_n$ contains a C$^*$-diagonal is surprisingly difficult to answer. In \cite{hjelmborgstability}, Hjelmborg presented a construction of $\mathcal{O}_2$ as the C$^*$-algebra of a principal groupoid; by Renault's and Kumjian's work (\cite{renault2008cartan}, \cite{kumjian1986c}), this yields a C$^*$-diagonal in $\mathcal{O}_2$. Hjelmborg's argument factorises through Kirchberg--Phillips classification and, in particular, it uses Kirchberg's $\mathcal{O}_2$-absorption theorem, which states that $A \otimes \mathcal{O}_2$ is isomorphic to $\mathcal{O}_2$ if and only if $A$ is a simple, separable, unital and nuclear C$^*$-algebra. The spectrum of Hjelmborg's C$^*$-diagonal is $\mathbb{T} \times \{1,2\}^\N$, and in particular one-dimensional. Via tensoring with itself, one then also obtains C$^*$-diagonals with higher-dimensional spectra in $\mathcal{O}_2$.

Until now it remained unknown whether $\mathcal{O}_2$ contains a C$^*$-diagonal with Cantor spectrum. Below we will give an affirmative answer -- and here is why we are excited about this: A long-term goal is it to develop a structure theory for Cartan and diagonal sub-C$^*$-algebras of a given classifiable C$^*$-algebra. This programme is in its infancy, and it is natural to focus on spectra which are as universal as possible, and in particular robust under taking tensor products. Perhaps more important, we hope to eventually employ K-theoretic tools to build distinguishing invariants, and Cantor spectrum promises the most direct access to such methods.\newline

We write $M_n$ for the $n \times n$ - matrices with coefficients in $\C$, and $\mathcal{K} \cong \mathcal{K}(\ell^2(\N))$ for the compact operators on an infinite-dimensional separable Hilbert space. For $i,j \in \{1,\ldots,n\}$, let $e_{ij}$ denote the standard matrix units in $M_n$ (respectively in $\mathcal{K}$). For a non-unital C$^*$-algebra $A$, we write $A^+$ for its unitisation.\newline

Before giving full details, let us outline the construction. Let $X$ be a Cantor space and fix a minimal homeomorphism on $X$. Then, the associated $\Z$-action is free, which implies that $\mathcal{C}(X)$, the C$^*$-algebra of continuous functions on $X$, is a C$^*$-diagonal in the crossed product C$^*$-algebra $ \mathcal{C}(X) \rtimes_\alpha \Z$ (see \cite{renault2008cartan}), where $\alpha$ denotes the induced action on $\mathcal{C}(X)$. Let $v \in \mathcal{C}(X) \rtimes_\alpha \Z$ be the unitary implementing the action, i.e.\ $v h v^* = \alpha (h)$ for all $h \in \mathcal{C}(X)$.

Let $D_{2^\infty}$ denote the canonical Cartan subalgebra in the Cuntz algebra $\mathcal{O}_{2}$. It is generated by elements of the form $S_\mu S_\mu^*$ for $\mu\in \{1,2\}^k$ and $k \in \N$. Then,
\begin{equation*} 
D_{2^\infty} \otimes \mathcal{C}(X) \subset \mathcal{O}_2 \otimes (\mathcal{C}(X) \rtimes_\alpha \Z) 
\end{equation*}
is a Cartan subalgebra which is not a C$^*$-diagonal (see \cite[Lemma 5.1]{barlak2017cartan}). However, we will see that 
\begin{equation}\label{InclusionDinB}
D:= D_{2^\infty} \otimes \mathcal{C}(X) \subset \mathrm{C}^*(D_{2^\infty} \otimes \mathcal{C}(X), S_1 \otimes v, S_2 \otimes v)=:B 
\end{equation}
is a C$^*$-diagonal, and the ambient C$^*$-algebra $B$ is nuclear and simple; see Lemmas \ref{DiagonalInB} and \ref{BNuclear}.

But now, the infinite tensor product $D^{\otimes \infty}\subset B^{\otimes \infty}$ is still a C$^*$-diagonal with Cantor spectrum in a simple nuclear C$^*$-algebra. $B$ contains a unital copy of the Cuntz algebra $\mathcal{O}_2$, and since the latter is strongly self-absorbing (see \cite{toms2007strongly}), we have that $ B^{\otimes \infty} \cong B^{\otimes \infty} \otimes \mathcal{O}_2$; cf.\ \cite{rordam1994short}. The isomorphism with $\mathcal{O}_2$ then follows from Kirchberg's $\mathcal{O}_2$-absorption theorem, and we have our main result:

\begin{thm}\label{CantorDiagonalInO2}
$D^{\otimes \infty} $ is a C$^*$-diagonal in $B^{\otimes \infty} $ with spectrum homeomorphic to the Cantor space, and $B^{\otimes \infty} $ is isomorphic to the Cuntz algebra $\mathcal{O}_2$.
\end{thm}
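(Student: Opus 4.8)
The plan is to assemble the theorem from three ingredients that the excerpt has effectively isolated: first, that $D \subset B$ is a C$^*$-diagonal with $B$ simple and nuclear (Lemmas~\ref{DiagonalInB} and~\ref{BNuclear}); second, that infinite tensor powers preserve the C$^*$-diagonal property and produce Cantor spectrum; and third, that $B^{\otimes \infty}$ is isomorphic to $\mathcal{O}_2$. I would treat these in order, since each reduces to a known structural principle once the previous one is in hand.

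For the tensor-power step, I would argue that the class of C$^*$-diagonals is closed under (minimal, i.e.\ spatial) tensor products and, by a direct-limit argument, under countable infinite tensor products. Concretely, if $D_i \subset A_i$ is a C$^*$-diagonal for each $i$, then $\bigotimes_i D_i \subset \bigotimes_i A_i$ is again a C$^*$-diagonal: maximal abelianness and regularity pass to tensor products of normalisers, the tensor product of faithful conditional expectations is a faithful conditional expectation, and the unique extension property of pure states is preserved because pure states on a tensor product of abelian algebras factor as tensor products of pure states and their unique extensions tensor together to the unique extension on $\bigotimes_i A_i$. I would phrase this first for finite tensor products and then pass to the inductive limit $B^{\otimes \infty} = \varinjlim B^{\otimes k}$, checking that the defining conditions (0)--(4) are stable under the limit. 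For the spectrum, the Gelfand spectrum of $D^{\otimes \infty}$ is the product of countably many copies of the spectrum of $D = D_{2^\infty} \otimes \mathcal{C}(X)$; since that spectrum is $\{1,2\}^{\mathbb{N}} \times X$ with $X$ a Cantor space, it is a second-countable, compact, metrisable, totally disconnected space with no isolated points, hence a countable product of such is again Cantor by Brouwer's characterisation.

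For the identification $B^{\otimes \infty} \cong \mathcal{O}_2$, I would follow the route sketched in the introduction. The algebra $B$ contains a unital copy of $\mathcal{O}_2$, generated by $S_1 \otimes v$ and $S_2 \otimes v$, whose range projections are orthogonal and sum to the unit; since $\mathcal{O}_2$ is strongly self-absorbing, the standard half-flip argument yields $B^{\otimes \infty} \cong B^{\otimes \infty} \otimes \mathcal{O}_2$. Now $B^{\otimes \infty}$ is a separable, unital, nuclear C$^*$-algebra (inheriting simplicity and nuclearity from $B$ via the infinite tensor product), so Kirchberg's $\mathcal{O}_2$-absorption theorem applies and gives $B^{\otimes \infty} \otimes \mathcal{O}_2 \cong \mathcal{O}_2$, whence $B^{\otimes \infty} \cong \mathcal{O}_2$.

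The main obstacle, and the step deserving the most care, is the preservation of the unique extension property under infinite tensor products, since this is precisely the condition that distinguishes a C$^*$-diagonal from a mere Cartan subalgebra and is the delicate point here (after all, $D_{2^\infty} \otimes \mathcal{C}(X)$ fails to be a diagonal in the larger algebra of~\eqref{InclusionDinB}). I would want to verify carefully that the unique extension property is genuinely inherited by the inductive limit and not merely at each finite stage; the cleanest formulation is probably to use Renault's groupoid picture, realising each C$^*$-diagonal $D \subset B$ as coming from a principal groupoid and noting that the infinite tensor product corresponds to the product groupoid, which remains principal. This sidesteps any subtle continuity issue with pure-state extensions along the limit and lets me conclude the diagonal property structurally rather than by a state-by-state argument.
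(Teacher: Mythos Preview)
Your proposal is correct and follows essentially the same route as the paper: Lemmas~\ref{DiagonalInB} and~\ref{BNuclear} for the base case, then tensor-stability of C$^*$-diagonals (the paper simply cites \cite[Lemma~5.2]{barlak2017cartan} rather than verifying conditions (0)--(4) directly or passing through the product-of-principal-groupoids picture you sketch), and finally strong self-absorption of $\mathcal{O}_2$ combined with Kirchberg's $\mathcal{O}_2$-absorption theorem. The only cosmetic difference is that the paper makes the $\mathcal{O}_2 \hookrightarrow B$ step explicit by embedding $\mathcal{O}_2$ into increasingly high tensor factors of $B^{\otimes\infty}$ to land in the central sequence algebra, which is precisely the mechanism underlying your half-flip remark.
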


We now carry out the steps above in detail.\newline

\begin{lem}\label{DiagonalInB}
$D \subset B$ as in \eqref{InclusionDinB} is a C$^*$-diagonal.
\end{lem}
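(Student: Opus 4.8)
The plan is to verify conditions (0)--(4) of the definition directly from the generators. Abbreviate $T_i := S_i \otimes v$. Since $T_i^* T_i = 1$ and $T_1 T_1^* + T_2 T_2^* = (S_1 S_1^* + S_2 S_2^*) \otimes 1 = 1$, the elements $T_1, T_2$ are Cuntz isometries; moreover $T_\mu T_\mu^* = S_\mu S_\mu^* \otimes 1$, so $D_{2^\infty} \otimes 1 \subseteq \mathrm{C}^*(T_1, T_2)$ and hence $B = \mathrm{C}^*(1 \otimes \mathcal{C}(X), T_1, T_2)$. The defining relation $v h v^* = \alpha(h)$ yields the covariance $T_i (1 \otimes f) = (1 \otimes \alpha(f)) T_i$, and a standard reduction of words in the $T_i$, $T_i^*$ and $D$ (using $T_i^* T_j = \delta_{ij}$) shows that $B$ is the closed linear span of the elements $T_\mu (d \otimes f) T_\nu^* = S_\mu d S_\nu^* \otimes v^{|\mu|} f v^{*|\nu|}$, for multi-indices $\mu, \nu$, with $d \in D_{2^\infty}$ and $f \in \mathcal{C}(X)$. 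Conditions (0) and (1) are then immediate: $B$ is unital with unit lying in $D$, and since $D$ is maximal abelian in $\tilde{B} := \mathcal{O}_2 \otimes (\mathcal{C}(X) \rtimes_\alpha \Z)$ (being Cartan by \cite[Lemma 5.1]{barlak2017cartan}), we get $D' \cap B \subseteq D' \cap \tilde{B} = D$, so $D$ is maximal abelian in $B$ as well.

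For (2) I would check that $T_1, T_2$ normalise $D$: a direct computation gives $T_i (d \otimes f) T_i^* = S_i d S_i^* \otimes \alpha(f)$ and $T_i^* (d \otimes f) T_i = S_i^* d S_i \otimes \alpha^{-1}(f)$, and both lie in $D$ because the $S_i$ normalise $D_{2^\infty}$ in $\mathcal{O}_2$. As $D \cup \{T_1, T_2\}$ generates $B$ by definition, the normalisers generate $B$. For (3) I would restrict to $B$ the conditional expectation $\Phi \otimes \Psi_X$, where $\Phi \colon \mathcal{O}_2 \to D_{2^\infty}$ and $\Psi_X \colon \mathcal{C}(X) \rtimes_\alpha \Z \to \mathcal{C}(X)$ are the canonical faithful conditional expectations. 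On spanning elements one computes
\[
(\Phi \otimes \Psi_X)\bigl(T_\mu (d \otimes f) T_\nu^*\bigr) = \delta_{\mu \nu}\, S_\mu d S_\mu^* \otimes \alpha^{|\mu|}(f) \in D,
\]
so that $\Psi := (\Phi \otimes \Psi_X)|_B$ is a conditional expectation of $B$ onto $D$, faithful because $\Phi$ and $\Psi_X$ are.

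The heart of the argument, and the step I expect to be the main obstacle, is the unique extension property (4): this is precisely the axiom that fails for $D_{2^\infty} \subset \mathcal{O}_2$, and it is where freeness of $\alpha$ must enter. Pure states on $D \cong \mathcal{C}(\{1,2\}^{\N} \times X)$ are the evaluations $\mathrm{ev}_{(x,s)}$. Let $\phi$ be any state on $B$ extending $\mathrm{ev}_{(x,s)}$. A Cauchy--Schwarz (pinching) argument shows $\phi(bw) = w(x,s)\phi(b)$ and $\phi(u b u^*) = \phi(b)$ for all $b \in B$ and $w, u \in D$ with $u$ unitary, so it suffices to prove $\phi\bigl(T_\mu (d \otimes f) T_\nu^*\bigr) = 0$ whenever $\mu \neq \nu$. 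If $|\mu| = |\nu|$, compressing by a projection $S_\rho S_\rho^* \otimes 1 \in D$ with $\rho$ a prefix of $x$ longer than $\mu$ and $\nu$ annihilates the term, since two distinct words of equal length cannot both be prefixes of $x$; hence $\phi$ vanishes on it.

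The remaining case $m := |\mu| - |\nu| \neq 0$ is where the coupling to the free action pays off. Writing $b := T_\mu (d \otimes f) T_\nu^*$ and conjugating by the diagonal unitary $u = 1 \otimes w$ with $w \in \mathcal{C}(X)$ unitary, the relation $w v^k = v^k \alpha^{-k}(w)$ gives
\[
u b u^* = b\,(1 \otimes h), \qquad h := \alpha^{-m}(w)\,\overline{w} \in \mathcal{C}(X),
\]
whence $\phi(b) = \phi(u b u^*) = \phi\bigl(b(1 \otimes h)\bigr) = h(s)\,\phi(b)$. Since $\alpha$ is free and $m \neq 0$, the characters $\mathrm{ev}_s$ and $\mathrm{ev}_s \circ \alpha^{-m}$ are evaluations at two distinct points of $X$, so choosing a unitary $w$ separating them (taking values $1$ and $-1$, say) yields $h(s) \neq 1$ and forces $\phi(b) = 0$. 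Thus every extension of $\mathrm{ev}_{(x,s)}$ coincides with $\mathrm{ev}_{(x,s)} \circ \Psi$; this establishes (4) and completes the verification that $D \subset B$ is a C$^*$-diagonal.
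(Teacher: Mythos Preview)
Your proof is correct. For conditions (0)--(3) you argue exactly as the paper does: maximal abelianness and the faithful conditional expectation are inherited from the ambient Cartan pair $D \subset \mathcal{O}_2 \otimes (\mathcal{C}(X) \rtimes_\alpha \Z)$ via \cite[Lemma~5.1]{barlak2017cartan}, and regularity is checked on the generators $S_i \otimes v$.

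The genuine difference is in (4). The paper extends a given pure state on $D$ to a \emph{pure} state on the ambient tensor product $\mathcal{O}_2 \otimes (\mathcal{C}(X) \rtimes_\alpha \Z)$, invokes \cite[Lemma~11.3.6]{kadison1986fundamentals} to factor it as $\overline{\rho} \otimes \overline{\sigma}$, and then observes that the coupling of the $v$-degree and the $S$-degree forces the first tensor leg to land in $M_{2^\infty}$, where $\overline{\rho}$ is uniquely determined. Your argument stays entirely inside $B$: you use that $D$ lies in the multiplicative domain of any state extension $\phi$, and then kill the off-diagonal spanning elements $T_\mu(d\otimes f)T_\nu^*$ in two cases --- for $|\mu|=|\nu|$ by compressing with a cylinder projection $S_\rho S_\rho^*\otimes 1$, and for $|\mu|\neq|\nu|$ by conjugating with $1\otimes w$ and exploiting freeness of $\alpha$ to get $h(s)=\alpha^{-m}(w)(s)\,\overline{w(s)}\neq 1$. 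This is more elementary: it avoids the extension step to the larger algebra and the Kadison--Ringrose tensor-splitting lemma, and it makes the role of freeness completely explicit (it is precisely what allows the choice of $w$ in the second case). The paper's route, on the other hand, packages both cases at once via the degree observation and is closer in spirit to how the unique extension property is usually deduced from principality of the underlying groupoid.
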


\begin{proof}
Since $D_{2^\infty} \otimes \mathcal{C}(X) $ is a Cartan subalgebra in $ \mathcal{O}_2 \otimes (\mathcal{C}(X) \rtimes_\alpha \Z)$ by \cite[Lemma 5.1]{barlak2017cartan}, it is maximal abelian also in $B$ and there is a faithful conditional expectation $B \to D_{2^\infty} \otimes \mathcal{C}(X) $. To see that $D_{2^\infty}\otimes \mathcal{C}(X)$ is regular in $B$, it suffices to check that $S_1 \otimes v$ and $S_2\otimes v$ are normalisers. But for elementary tensors of the form $d \otimes f \in D_{2^\infty} \otimes \mathcal{C}(X)$, we have
\begin{equation*}
(S_i \otimes v) (d \otimes f)(S_i \otimes v)^* = (S_i d S_i^* \otimes vfv^*)
\end{equation*}
and
\begin{equation*}
(S_i \otimes v)^* (d \otimes f)(S_i \otimes v) = (S_i^* d S_i \otimes v^*fv),
\end{equation*}
which again lie in $D_{2^\infty}\otimes \mathcal{C}(X)$. Now, since sums of elementary tensors are dense in the closed subalgebra $D_{2^\infty} \otimes \mathcal{C}(X)$ and since conjugation with $S_i \otimes v$ and $(S_i \otimes v)^*$ is continuous, it follows that $S_i \otimes v$ are indeed normalisers.

Thus, it only remains to show the unique extension property of pure states. For this, consider an arbitrary pure state on $D_{2^\infty}\otimes \mathcal{C}(X)$. It is of the form $\rho \otimes \sigma$ for a pure state $\rho$ on $D_{2^\infty}$ and a pure state $\sigma$ on $\mathcal{C}(X)$. Let $\overline{\rho \otimes \sigma}$ be an extension to a pure state on $\mathcal{O}_2 \otimes ( \mathcal{C}(X) \rtimes_\alpha \Z)$. Since $\overline{\rho \otimes \sigma}( \mathbf{1}_{\mathcal{O}_2} \otimes \, . \,)$ is a state on $\mathcal{C}(X) \rtimes_\alpha \Z$ extending $\sigma$, it must be the unique extension, denoted by $\overline{\sigma}$, and hence pure. It follows from \cite[Lemma 11.3.6]{kadison1986fundamentals} that $\overline{\rho \otimes \sigma}=\overline{\rho} \otimes \overline{\sigma}$ for a state $\overline{\rho}$ on $\mathcal{O}_2$, which extends $\rho$. Note that a priori, $\overline{\rho}$ does not need to be unique on $\mathcal{O}_2$ but we claim that on $B$, $\overline{\rho} \otimes \overline{\sigma}$ already is uniquely determined. To see this, we consider an arbitrary product $k = (d \otimes f) k_1 \, \ldots\, k_l$ for $l \in \N$, where $d \in D_{2^\infty}$ and $f \in \mathcal{C}(X)$, and each $k_i$ is $S_1\otimes v$, $ S_1^*\otimes v^*$, $S_2\otimes v$ or $S_2^*\otimes v^*$. As the action on $X$ is free and $\overline{\sigma}$ is obtained using the conditional expectation of $ \mathcal{C}(X) \rtimes_\alpha \Z $ onto $ \mathcal{C}(X)$ followed by the point evaluation $\sigma$, $\overline{\rho}\otimes \overline{\sigma} \, (k)$ can only be nonzero if $v$ and $v^*$ occur the same number of times in the product. Therefore, if $\overline{\rho}\otimes \overline{\sigma} \, (k)$ is nonzero, then $k$ also contains the same number of $S_i$'s and $S_i^*$'s. Thus, the first tensor factor of $k$ has to be in the subalgebra of $\mathcal{O}_2$ generated by elements of the form $S_\mu S_\nu^*$ with $\ell(\mu) = \ell(\nu)$. On this subalgebra, which is isomorphic to the UHF algebra of type $2^\infty$, $\overline{\rho}$ also is uniquely determined. Now, since the linear span of elements like $k$ is dense in $B$, $\overline{\rho} \otimes \overline{\sigma}$ is uniquely determined on $B$.
\end{proof}

Our proof that $B$ is nuclear closely follows Cuntz's argument for $\mathcal{O}_n$. 
Our initial proof of the simplicity of $B$ also followed Cuntz's ideas, but discussions with S.\ Evington prompted us to use a result by Echterhoff (\cite[Corollary 2.7.35]{cuntz2017k}). In order to set up notation, we recall the construction in the case $n=2$ from \cite{cuntz1977simple}, where $\mathcal{O}_2$ was presented as a crossed product cut down by a projection.

We fix an isomorphism $ \phi : \mathcal{K} \to \mathcal{K} \otimes M_2$ such that $\phi(e_{11}) = e_{11} \otimes e_{11}$ and let $\text{id}_{M_{2^\infty}}: M_{2^\infty} \to M_{2^\infty}$ denote the identity map. Then, we can define an automorphism $\Phi : \mathcal{K} \otimes M_{2^\infty}\to \mathcal{K} \otimes M_{2^\infty}$ via $\Phi = \phi \otimes \text{id}_{M_{2^\infty}}$, that is 
\begin{equation*}
\Phi= \phi \otimes \text{id}_{M_{2^\infty}}: \mathcal{K} \otimes M_2 \otimes \ldots \longrightarrow (\mathcal{K} \otimes M_2) \otimes M_2 \otimes \ldots = \mathcal{K} \otimes M_{2^\infty}.
\end{equation*}
Now, $\Phi$ canonically extends to an automorphism $\Phi^+$ on the unitisation $(\mathcal{K} \otimes M_{2^\infty})^+$ and we obtain a $\Z$-action on $(\mathcal{K} \otimes M_{2^\infty})^+$. Thus, we can consider the crossed product $(\mathcal{K} \otimes M_{2^\infty})^+ \rtimes_{\Phi^+} \Z $. Let $u \in (\mathcal{K} \otimes M_{2^\infty})^+ \rtimes_{\Phi^+} \Z $ be the unitary implementing the action, i.e.\ $ux u^*= \Phi^+(x)$ for all $x \in (\mathcal{K} \otimes M_{2^\infty})^+$.

Now, $\mathcal{O}_2$ is (isomorphic to) the crossed product $(\mathcal{K} \otimes M_{2^\infty})^+\rtimes_{\Phi^+} \Z$ cut down by the projection $p: = e_{11} \otimes \mathbf{1}_{M_{2^\infty}} \in (\mathcal{K} \otimes M_{n^\infty})^+$, i.e.\
\begin{equation*}
\mathcal{O}_2 \cong p \, ((\mathcal{K} \otimes M_{2^\infty})^+\rtimes_{\Phi^+} \Z)\,p,
\end{equation*}
sending the generators $S_1$ and $S_2$ to $ up $ and $(e_{11} \otimes e_{21} \otimes \mathbf{1}_{M_2} \otimes \ldots ) \,up$, respectively (also see \cite[Section 4.2]{rordam2002classification}). Note that under this identification, we have
\begin{equation}\label{UHFviaGenerators}
{S}_{i_1} \ldots S_{i_k} {S}_{j_k}^* \ldots {S}_{j_1}^* = e_{11} \otimes e_{i_1 j_1} \otimes \ldots \otimes e_{i_k j_k} \otimes \mathbf{1}_{M_2} \otimes \ldots \, ,
\end{equation}
so $D_{2^\infty} \subset \mathcal{O}_2$ is identified with $e_{1 1} \otimes D_{2^\infty} \subset e_{11} \otimes M_{2^\infty} \subset p\,((\mathcal{K} \otimes M_{2^\infty})^+ \rtimes_{\Phi^+} \Z)\, p$.

We define elements 
\begin{align*}
\tilde{S}_1 &:= S_1 \otimes v = up \otimes v, \\
\tilde{S}_2 &:= S_2 \otimes v = (e_{11} \otimes e_{21} \otimes \mathbf{1}_{M_2} \otimes \ldots ) \, up \otimes v
\end{align*}
in $ \mathcal{O}_2 \otimes (\mathcal{C}(X) \rtimes_\alpha \Z)\subset((\mathcal{K} \otimes M_{2^\infty})^+\rtimes_{\Phi^+} \Z )\otimes (\mathcal{C}(X) \rtimes_\alpha \Z) $. Then, $\tilde{S}_1$ and $\tilde{S}_2$ also satisfy the Cuntz relations with unit $\tilde{p}:= p \otimes \mathbf{1}_{\mathcal{C}(X)} $, i.e.\ $\tilde{S}_i^* \tilde{S}_i = \tilde{S}_1 \tilde{S}_1^*+\tilde{S}_2\tilde{S}_2^* = \tilde{p}=\tilde{p}^*=\tilde{p}^2 $ and $\tilde{p} \tilde{S}_i \tilde{p}=\tilde{S}_i$. Furthermore, we have the identification
\begin{align*}
B &= \textnormal{C}^*(D_{2^\infty} \otimes \mathcal{C}(X), S_1 \otimes v , S_2 \otimes v) \\
 &\cong \textnormal{C}^*(e_{11}\otimes D_{2^\infty} \otimes \mathcal{C}(X), \tilde{S}_1 , \tilde{S}_2),
\end{align*}
when viewing $B$ as a subalgebra of $((\mathcal{K} \otimes M_{2^\infty})^+\rtimes_{\Phi^+} \Z )\otimes (\mathcal{C}(X) \rtimes_\alpha \Z) $.

\begin{lem}\label{BNuclear}
$B$ is nuclear and simple.
\end{lem}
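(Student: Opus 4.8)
The plan is to present $B$ as a corner of a single crossed product, in exact analogy with the description of $\mathcal{O}_2$ recalled above, and then to read off nuclearity and simplicity from standard permanence properties together with a dynamical simplicity criterion. Inside $((\mathcal{K} \otimes M_{2^\infty})^+ \rtimes_{\Phi^+} \Z) \otimes (\mathcal{C}(X) \rtimes_\alpha \Z)$ the unitary $u \otimes v$ implements the diagonal automorphism $\Phi^+ \otimes \alpha$ of the coefficient algebra $(\mathcal{K} \otimes M_{2^\infty})^+ \otimes \mathcal{C}(X)$, since $(u\otimes v)(x\otimes f)(u\otimes v)^* = \Phi^+(x)\otimes\alpha(f)$. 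Hence the sub-C$^*$-algebra generated by this coefficient algebra together with $u\otimes v$ is canonically isomorphic to the crossed product $C:=((\mathcal{K}\otimes M_{2^\infty})^+\otimes\mathcal{C}(X))\rtimes_{\Phi^+\otimes\alpha}\Z$. As $\tilde S_1 = (u\otimes v)\tilde p$ and $\tilde S_2 = (e_{11}\otimes e_{21}\otimes\mathbf 1_{M_2}\otimes\ldots\otimes\mathbf 1_{\mathcal C(X)})(u\otimes v)\tilde p$ satisfy the Cuntz relations with unit $\tilde p = e_{11}\otimes\mathbf 1_{M_{2^\infty}}\otimes\mathbf 1_{\mathcal C(X)}$, and since $(\Phi^+\otimes\alpha)(\tilde p)=\Phi(p)\otimes\mathbf 1\le\tilde p$, Cuntz's computation (\cite{cuntz1977simple}) applies verbatim and identifies $B$ with the corner $\tilde p\,C\,\tilde p$: the products $\tilde S_\mu\tilde S_\nu^*$ with $\ell(\mu)=\ell(\nu)$ together with $e_{11}\otimes D_{2^\infty}\otimes\mathcal C(X)$ already recover all of $e_{11}\otimes M_{2^\infty}\otimes\mathcal C(X)$, so the generators of $B$ exhaust the corner.

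For nuclearity I would argue exactly as Cuntz does. The building blocks $\mathcal K$, $M_{2^\infty}$ and $\mathcal C(X)$ are nuclear, hence so is their minimal tensor product and its unitisation, that is, the coefficient algebra of $C$. Since $\Z$ is amenable, the crossed product $C$ is again nuclear, and finally $B=\tilde p\,C\,\tilde p$ is a hereditary subalgebra of a nuclear C$^*$-algebra and therefore nuclear.

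For simplicity, observe that $\tilde p$ lies in the ideal $I$ of $C$ given by the crossed product $(\mathcal K\otimes M_{2^\infty}\otimes\mathcal C(X))\rtimes_{\Phi\otimes\alpha}\Z$ of the invariant ideal $\mathcal K\otimes M_{2^\infty}\otimes\mathcal C(X)$. As $I$ is an ideal containing $\tilde p$, we have $B=\tilde p\,C\,\tilde p=\tilde p\,I\,\tilde p$, a hereditary subalgebra of $I$; since hereditary subalgebras of simple C$^*$-algebras are simple, it suffices to prove that $I$ is simple. For this I would invoke Echterhoff's simplicity criterion \cite[Corollary 2.7.35]{cuntz2017k}, whose hypotheses are minimality and topological freeness of the $\Z$-action $\Phi\otimes\alpha$. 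Both are immediate from the dynamics on $X$. Since $\mathcal K\otimes M_{2^\infty}$ is simple, the ideals of the coefficient algebra correspond to the open subsets $U\subseteq X$ via $U\mapsto\mathcal K\otimes M_{2^\infty}\otimes\mathcal C_0(U)$, so the induced action on the primitive ideal space $X$ is just $\alpha$, and minimality of $\alpha$ leaves only the trivial invariant ideals, which is minimality. A minimal homeomorphism of the infinite compact space $X$ has no periodic points, whence $\{x:\alpha^n(x)=x\}=\emptyset$ for every $n\ne 0$, giving topological freeness.

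The one step requiring genuine care is the identification $B=\tilde p\,C\,\tilde p$ of the first paragraph: one must track the two implementing unitaries $u$ and $v$ simultaneously through the single diagonal action, note the key relation $(\Phi^+\otimes\alpha)(\tilde p)\le\tilde p$ inherited from $\Phi(p)\le p$, and verify that the chosen generators of $B$ exhaust the corner rather than a proper subalgebra. Once this structural picture is secured, nuclearity is a routine permanence argument and simplicity follows at once from Echterhoff's criterion together with the minimality of $\alpha$.
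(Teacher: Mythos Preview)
Your approach coincides with the paper's: realise $B$ as the corner $\tilde p(\,\cdot\,)\tilde p$ of the algebra arising from the diagonal $\Z$-action $\Phi^+\otimes\alpha$, deduce nuclearity from amenability of $\Z$ together with permanence under hereditary subalgebras, and deduce simplicity from Echterhoff's criterion \cite[Corollary 2.7.35]{cuntz2017k} applied to $(\mathcal{K}\otimes M_{2^\infty}\otimes\mathcal{C}(X))\rtimes_{\Phi\otimes\alpha}\Z$. The one place where the paper is more careful is your assertion that the sub-C$^*$-algebra $E=\mathrm{C}^*((\mathcal{K}\otimes M_{2^\infty})^+\otimes\mathcal{C}(X),\,u\otimes v)$ inside the tensor product is canonically \emph{isomorphic} to the abstract crossed product $C$: the universal property gives only a surjection $C\to E$, and the paper works with this surjection throughout (nuclearity passes to quotients before passing to the hereditary corner, and for simplicity the simple ideal $I$ must map injectively since its image contains $\tilde p\neq 0$), so injectivity of $C\to E$ is never actually required.
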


\begin{proof}
First, we note that $\Phi^+ \otimes \alpha$ yields a $\Z$-action on $(\mathcal{K} \otimes M_{2^\infty})^+ \otimes \mathcal{C}(X)$ which is implemented by $u \otimes v$. Thus, by the universal property of the crossed product (cf.\ \cite[Definition II.10.3.7]{blackadar2006operator}), there exists a surjection 
\begin{equation*}
((\mathcal{K} \otimes M_{2^\infty})^+ \otimes \mathcal{C}(X)) \rtimes_{\Phi^+ \otimes \alpha} \Z \longrightarrow \mathrm{C}^*((\mathcal{K} \otimes M_{2^\infty})^+ \otimes \mathcal{C}(X), u \otimes v) =:E.
\end{equation*}
We consider $E$ as a subalgebra of $((\mathcal{K} \otimes M_{2^\infty})^+ \rtimes_{\Phi^+}\Z ) \otimes( \mathcal{C}(X) \rtimes_\alpha \Z)$.

As crossed products of nuclear C$^*$-algebras by amenable groups are nuclear (\cite[Proposition 14]{green1978local}), we find that $((\mathcal{K} \otimes M_{2^\infty})^+ \otimes \mathcal{C}(X)) \rtimes_{\Phi^+ \otimes \alpha} \Z $ is nuclear. Now, since $E$ is a quotient of a nuclear C$^*$-algebra, it is nuclear as well (\cite[Corollary 3.3]{choi1976separable}, \cite{tomiyama1970tensor}), and so is the hereditary sub-C$^*$-algebra $\tilde{p} E \tilde{p}$ (\cite[Corollary 3.3]{choi1978nuclear}). \newline

We now check that inside $((\mathcal{K}\otimes M_{2^\infty})^+ \rtimes_{\Phi^+} \Z) \otimes (\mathcal{C}(X) \rtimes_\alpha \Z)$, $B$ coincides with $ \tilde{p}E\tilde{p}$, which will conclude the proof of nuclearity. This will be done analogously to the presentation of $\mathcal{O}_2$ as a crossed product cut down by a projection (see \cite[Section 2.1]{cuntz1977simple}).

First, note that since $\tilde{S}_1 = \tilde{p} \tilde{S}_1\tilde{p}$ and $\tilde{S}_2 = \tilde{p} \tilde{S}_2\tilde{p}$, we have $B \subset \tilde{p} E\tilde{p}$. For the reverse inclusion, let us consider an $a \in E \subset ((\mathcal{K} \otimes M_{2^\infty})^+ \rtimes_{\Phi^+} \Z ) \otimes( \mathcal{C}(X) \rtimes_\alpha \Z)$ of the form
\begin{equation*}
a = \sum_{i=-N}^N x_i u^i \otimes y_i v^i,
\end{equation*}
where $N \in \N$ and $ x_i \in (\mathcal{K} \otimes M_{2^\infty})^+$, $y_i \in \mathcal{C}(X)$ for all $i= -N,\ldots,N$. Elements of this form are dense in $E$. When writing $\tilde{x}_i = u^{-i} x_i u^i $ and $\tilde{y}_i = v^{-i} y_i v^i $ for $i < 0$, we obtain 
\begin{equation*}
a = \sum_{i<0} u^i \tilde{x}_i \otimes v^i \tilde{y}_i+ x_0 \otimes y_0+ \sum_{i>0} x_i u^i \otimes y_i v^i.
\end{equation*}
Using that $up = S_1 = \mathbf{1}_{\mathcal{O}_2} S_1 = pup$, we see that $px_i u^i p = (px_ip)(up)^i $ for $i > 0$ and $pu^i\tilde{x}_i p = ((up)^*)^{-i} p \tilde{x}_i p$ for $i<0$. This yields
\begin{align*}
\tilde{p} a \tilde{p} &= \sum_{i<0} ((up)^*)^{-i} p \tilde{x}_ip \otimes v^i \tilde{y}_i+ px_0p \otimes v_0 + \sum_{i>0} (px_ip)(up)^i \otimes y_i v	^i\\
 &= \sum_{i<0} (S_1^*)^{-i} p \tilde{x}_ip \otimes v^i \tilde{y}_i+ px_0p\otimes y_0+ \sum_{i>0} (px_ip)S_1^i \otimes y_i v^i\\
 &= \sum_{i<0} ((S_1\otimes v)^*)^{-i} (p \tilde{x}_ip \otimes \tilde{y}_i)+ px_0p \otimes y_0+ \sum_{i>0} ((px_ip) \otimes y_i)(S_1 \otimes v)^i.
\end{align*}
Thus, $\tilde{p} E \tilde{p}$ is generated as a C$^*$-algebra by $p\, (\mathcal{K} \otimes M_{2^\infty})^+ \,p \otimes \mathcal{C}(X) = e_{11} \otimes M_{2^\infty} \otimes \mathcal{C}(X) $ together with $\tilde{S}_1$. Recall from \eqref{UHFviaGenerators} that 
\begin{equation*}
\tilde{S}_{i_1} \ldots \tilde{S}_{i_k} \tilde{S}_{j_k}^* \ldots \tilde{S}_{j_1}^* = e_{11} \otimes e_{i_1 j_1} \otimes \ldots \otimes e_{i_k j_k} \otimes \mathbf{1}_{M_2} \otimes \ldots \otimes \mathbf{1}_{\mathcal{C}(X)},
\end{equation*}
whence $e_{11} \otimes M_{2^\infty} \otimes \mathcal{C}(X) $ is generated by $\tilde{S}_1, \tilde{S}_2$ and $e_{11} \otimes \mathbf{1}_{M_{2^\infty}} \otimes \mathcal{C}(X)$. Therefore, $\tilde{p} E \tilde{p}$ is generated by $\tilde{S}_1$, $\tilde{S}_2$ and $e_{11} \otimes \mathbf{1}_{M_{2^\infty}} \otimes \mathcal{C}(X)$, which yields that $\tilde{p} E \tilde{p}$ is contained in $B$.\newline

We further observe that $\tilde{p}E\tilde{p} = \tilde{p} F \tilde{p}$, where $F = \mathrm{C}^*(\mathcal{K} \otimes M_{2^\infty} \otimes \mathcal{C}(X), u \otimes v)$. By the universal property of the crossed product, there exists a $^*$-homomorphism
\begin{equation*}
(\mathcal{K} \otimes M_{2^\infty} \otimes \mathcal{C}(X)) \rtimes_{\Phi \otimes \alpha} \Z \longrightarrow F,
\end{equation*}
the image of which contains $\tilde{p} F \tilde{p}$ as a hereditary subalgebra. By \cite[Corollary 2.7.35]{cuntz2017k}, the crossed product $(\mathcal{K} \otimes M_{2^\infty} \otimes \mathcal{C}(X)) \rtimes_{\Phi \otimes \alpha} \Z$ is simple, hence so is its image in $F$, and therefore also the hereditary subalgebra $\tilde{p} F \tilde{p}$, which in turn is isomorphic to $B$.
\end{proof}

We proceed by taking infinite tensor products and by \cite[Lemma 5.2]{barlak2017cartan}, we again obtain a C$^*$-diagonal
\begin{equation*}
D^{\otimes \infty} \subset B^{\otimes \infty}.
\end{equation*}
The spectrum of $D^{\otimes \infty} $ is the product of the individual spectra and therefore again a Cantor space.

Note that $ B^{\otimes \infty}$ is still simple, separable, unital and nuclear. Furthermore, $\mathcal{O}_2$ embeds unitally into increasingly higher tensor factors of $ B^{\otimes \infty}$, hence embeds unitally into the central sequence algebra $ \ell^\infty( \N, B^{\otimes \infty}) / c_0(\N , B^{\otimes \infty}) \cap (B^{\otimes \infty})^\prime$. This yields $B^{\otimes \infty} \cong B^{\otimes \infty} \otimes \mathcal{O}_2$ (\cite[p.\ 34]{rordam1994short}, \cite[Theorem 7.2.2]{rordam2002classification}), since $\mathcal{O}_2$ is strongly self-absorbing (cf.\ \cite{toms2007strongly}).

Now by Kirchberg's $\mathcal{O}_2$-absorption theorem (\cite[Corollary F]{kirchberg1994classification}, \cite[p.\ 952]{kirchberg1995exact}) and since $B^{\otimes \infty}$ is simple, separable, unital and nuclear, we have $B^{\otimes \infty} \cong B^{\otimes \infty} \otimes \mathcal{O}_2 \cong \mathcal{O}_2$; cf.\ \cite[Lemma 3.7]{kirchberg2000embedding}.

Hence, we have found a C$^*$-diagonal in $\mathcal{O}_2$ whose spectrum is a Cantor space and the proof of our theorem is complete. \hfill \qedsymbol \newline

We close with some remarks and open questions which are motivated by both our construction and our result.

\begin{rmk}
\begin{enumerate}
    \item[(i)] Algebras like our $B$ above already appeared in \cite{cuntz1981ktheoryII}, where a six-term exact sequence for their K-theory was derived. (With the notation of \cite{cuntz1981ktheoryII}, our $B$ can be written as $C(X) \times_{\mathcal{U}} \mathcal{O}_2$ with $\mathcal{U} =(u,u)$.)
    \item[(ii)] $B$ cannot coincide with $\mathcal{O}_2$ and it is therefore necessary to take the infinite tensor product $B^{\otimes \infty}$ in our construction. Indeed, S.\ Evington pointed out to us that in general $B$ carries interesting and non-trivial K-theoretic information; this will be pursued in upcoming work together with the first named author, where in particular, the K-theory of $B$ for certain Cantor minimal systems will be computed exactly.
\end{enumerate}
\end{rmk}

\begin{que}
\begin{enumerate}
\item[(i)] To what extent does our construction depend on the choice of the Cantor minimal system, and what would be a distinguishing invariant?
\item[(ii)] Do $(D \subset B) $ or $(D^{\otimes \infty} \subset B^{\otimes \infty})$ have finite diagonal dimension in the sense of \cite{li2023diagonal}?
\item[(iii)] Does the Cuntz algebra $\mathcal{O}_n$ for $n \ge 3$, or a general Kirchberg algebra contain a C$^*$-diagonal (with Cantor spectrum)? 
\end{enumerate}
\end{que}

We thank the referee for their careful reading of the manuscript and for pointing out the connection to Cuntz's paper \cite{cuntz1981ktheoryII}.

\bibliographystyle{plain}
\bibliography{References}

\bigskip
\end{document}